\newtheorem{theorem}{Theorem}[section]
\newtheorem{proposition}[theorem]{Proposition}
\newtheorem{lemma}[theorem]{Lemma}
\newtheorem{corollary}[theorem]{Corollary}
\newenvironment{proof}{\medbreak\noindent{\it Proof.}\rm}{\hfill$\square$\rm}
\numberwithin{equation}{section}
\renewcommand{\Re}{\operatorname{Re}}
\renewcommand{\Im}{\operatorname{Im}}
\newcommand{\Rn}{{ \mathbb R}^n}
\newcommand{\C}{{\mathbb  C}}
\newcommand{\D}{{\mathbb  D}}
\newcommand{\Cn}{{\mathbb  C}^n}
\newcommand{\Rnm}{{\mathbb R}_-^n}
\newcommand{\Rnp}{{\mathbb R}_+^n}
\newcommand{\cF}{{\mathcal F}}
\newcommand{\cO}{{\mathcal O}}
\newcommand{\E}{{\mathcal E}}
\newcommand{\cL}{{\mathcal  L}}
\newcommand{\PSH}{{\operatorname{PSH}}}
\newcommand{\Capa}{{\operatorname{Cap}\,}}
\newcommand{\Log}{{\operatorname{Log}\,}}
\newcommand{\Vol}{{\operatorname{Vol}}}
\newcommand{\Covol}{{\operatorname{Covol}}}
\begin{document}

\begin{center}{\Large\bf Plurisubharmonic geodesics and interpolating sets}
\end{center}

\begin{center}{\large Dario Cordero-Erausquin and Alexander Rashkovskii}
\end{center}

\bigskip

\begin{abstract} We apply a notion of geodesics of plurisubharmonic functions to interpolation of compact subsets of $\Cn$. Namely, two non-pluripolar, polynomially closed, compact subsets of $\Cn$  are interpolated as level sets $L_t=\{z: u_t(z)=-1\}$ for the geodesic $u_t$ between their relative extremal functions with respect to any ambient bounded domain. The sets $L_t$ are described in terms of certain holomorphic hulls. In the toric case, it is shown that the relative Monge-Amp\`ere capacities of $L_t$ satisfy a dual Brunn-Minkowski inequality.
\end{abstract}

\section{Introduction}
In the classical complex interpolation theory of Banach spaces, originated by Calder\'{o}n \cite{Ca} (see \cite{BL} and, for more recent developments, \cite{CEK} and references therein), a given family of Banach spaces $X_\xi$ parameterized by boundary points $\xi$ of a domain $C\subset\mathbb C^N$ gives rise to a family of Banach spaces $X_\zeta$ for all $\zeta\in C$. A basic setting is interpolation of two spaces, $X_0$ and $X_1$, for a partition $\{C_0, C_1\}$ of $\partial C$. More specifically, one can take $C$ to be the strip $0<{\Re}\, \zeta <1$ in the complex plane and $C_0, C_1$ the corresponding boundary lines, then the interpolated norms depend only on  $t=\Im\zeta$. In the finite dimensional case $X_j=(\Cn,\|\cdot\|_j)$, $j=0,1$, they
are defined in terms of the family of mappings $C\to\Cn$, bounded and analytic in the strip, continuous up to the boundary and tending to zero as ${\Im}\, \zeta\to\infty$, see details in \cite{BL}.
In this setting, the volume of the unit ball $B_t$ of $(\Cn,\|\cdot\|_t)$, $0<t<1$, was proved in \cite{CE} to be a logarithmically concave function of $t$.

When the given norms $\|\cdot\|_j$ on $\Cn$ are toric, i.e., satisfy $\|(z_1,\ldots,z_n)\|_j=\|(|z_1|,\ldots,|z_n|)\|_j$, the interpolated norms are toric as well and the balls $B_t$ are Reinhardt domains of $\Cn$ obtained as the multiplicative combinations (geometric means) of the balls $B_0$ and $B_1$. The logarithmic concavity implies that volumes of the multiplicative combinations
\begin{equation}\label{geommean} K_t^\times = K_0^{1-t}\,K_1^t\subset\Rn
\end{equation}
of any two convex bounded neighbourhoods $K_0$ and $K_1$ of the origin in $\Rn$ satisfy the Brunn-Minkowski inequality
\begin{equation}\label{volBM} \Vol(K_t^\times)\ge \Vol(K_0)^{1-t} \Vol(K_1)^t,\quad 0<t<1.\end{equation}
Note also that in \cite{S2}--\cite{S4}, the interpolated spaces were related to convex hulls and complex geodesics with convex fibers. In particular, it put the interpolation in the context of analytic multifunctions.

In this note, we develop a slightly different -- albeit close -- approach to the interpolation of compact, polynomially convex subsets of $\Cn$ by sets arising from a notion of plurisubharmonic geodesics. The technique originates from results on geodesics in the spaces of metrics on compact K\"{a}hler manifolds due to Mabuchi, Semmes, Donaldson, Berndtsson and others (see \cite{G12} and the bibliography therein). Its local counterpart for plurisubharmonic functions from Cegrell classes on domains of $\Cn$ was introduced in \cite{BB} and \cite{R17a}. We will need here a special case when the geodesics can be described as follows.

Let
$$A=\{\zeta\in\C:\:0< \log|\zeta| < 1\}$$
be the annulus bounded by the circles
$$A_j=\{\zeta:\: \log|\zeta|=j\},\quad j=0,1.$$
Given two plurisubharmonic functions $u_0$ and $u_1$ in a bounded hyperconvex domain $\Omega\subset\Cn$, equal to zero on $\partial \Omega$, we
consider the class $W(u_0,u_1)$ of all plurisubharmonic functions $u(z,\zeta)$ in the product domain $\Omega\times A$, such that $\limsup u(z,\zeta)\le u_j(z)$ for all $z\in\Omega$ as $\zeta\to A_j$. The function
\begin{equation}\label{upenv} \widehat u(z,\zeta)=\sup\{u(z,\zeta):\: u\in W(u_0,u_1)\} \end{equation}
belongs to the class and satisfies $\widehat u(z,\zeta)=\widehat u(z,|\zeta|)$, which gives rise to the functions $u_t(z):=\widehat u(z,e^t)$, $0<t<1$, the {\it geodesic} between $u_0$ and $u_1$. When the functions $u_j$ are bounded, the geodesic $u_t$ tend to $u_j$ as $t\to j$, uniformly on $\Omega$. One of the main properties of the geodesics is that they linearize the energy functional
\begin{equation}\label{enfunc}  \E(u)=\int_\Omega u(dd^c u)^n, \end{equation}
see \cite{BB}, \cite{R17a} (where actually more general classes of plurisubharmonic functions are considered).

Given two non-pluripolar compact sets $K_0,K_1\subset\Cn$, let $u_j$ denote the relative extremal functions of $K_j$, $j=0,1$, with respect to a bounded hyperconvex neighbourhood $\Omega$ of $K_0\cup K_1$, i.e.,
\begin{equation}\label{initial}
u_j(z)=\omega_{K_j,\Omega}(z)=\limsup_{y\to z}\, \sup\{u(y):\: u\in  \PSH_-(\Omega),\ u|_{K_j}\le -1\},
\end{equation}
where $\PSH_-(\Omega)$ is the collection of all nonpositive plurisubharmonic functions in $\Omega$. The functions $u_j $ belong to $\PSH_-(\Omega)$ and satisfy $(dd^cu_j)^n=0$ on $\Omega\setminus K_j$, see \cite{Kl}.

Assume, in addition, each  $K_j$ to be polynomially convex (in the sense that it coincides with its polynomial hull). This implies $\omega_{K_j,\Omega'}=-1$ on $K_j$ for some (and thus any) bounded hyperconvex neighborhood $\Omega'$ of $K_j$ and that $\omega_{K_j,\Omega'}\in C(\overline{\Omega'})$. In particular, the functions $u_j=-1$ on $K_j$ and are continuous on $\overline\Omega$. The geodesics $u_t$ converge to $u_j$ uniformly as $t\to j$ \cite{R17a} and so, by the Walsh theorem, the upper envelope $ \widehat u(z,\zeta)$ (\ref{upenv}) is continuous on $\Omega\times A$, which, in turn, implies  $u_t\in C(\overline\Omega\times [0,1])$.

As was shown in \cite{R17b}, functions $u_t$ in general are different from the relative extremal functions of any subsets of $\Omega$. Consider nevertheless the sets where they attain their minimal value, $-1$:
\begin{equation}\label{L_t} L_t=\{z\in \Omega:\: u_t(z)=-1\},\quad  0< t< 1.\end{equation}
By the continuity of the geodesic at the endpoints, the sets $L_t$ converge (say, in the Hausdorff metric) to $ K_j$ when  $t\to j\in\{0,1\}$ and so, they can be viewed as interpolations between $K_0$ and $K_1$.

The curve $t\mapsto L_t$ can be in a natural way identified with the multifunction $\zeta\mapsto K_\zeta:=L_{\log|\zeta|}$. Note however that it is not an {\it analytic multifunction} (for the definition, see, e.g., \cite{O}, \cite{S1}, \cite{Po}) because its graph
$\{(z,\zeta)\in \Omega\times A:\: \widehat u(z,\zeta)=-1\}$ is not pseudoconcave.

In Section~2, we show that the interpolating sets $L_t$ can be represented as sections $K_t=\{z:\: (z,e^t)\in \widehat K\} $ of the holomorphic hull
$\widehat K$ of the set
\begin{equation}\label{hull}K^A:=(K_0\times A_0) \cup (K_1\times A_1)\subset \C^{n+1}\end{equation}
with respect to functions holomorphic in $\Cn\times (\C\setminus\{0\})$.

In Section~3, we study the relative Monge-Amp\`ere capacities $\Capa(L_t,\Omega)$ of the sets $L_t$; recall that for $K\Subset\Omega$,
$$\Capa(K,\Omega)= \sup\{(dd^c u)^n(K):\: u\in\PSH_-(\Omega),\ u|_K\le -1\}=(dd^c\omega_{K,\Omega})^n(\Omega), $$
see \cite{Kl}.
 It was shown in \cite{R17a} that the function $t\mapsto\Capa(L_t,\Omega)$  is convex,
  which was achieved by using linearity of the energy functional (\ref{enfunc})
  along the geodesics.
In the case when $\Omega$ is the unit polydisk $\D^n$ and $K_j$ are Reinhardt sets, the convexity of the Monge-Amp\`ere capacities was rewritten in \cite{R17b} as convexity of covolumes of certain unbounded convex subsets $P_t$ of the positive orthant $\Rnp$ (that is, volumes of their complements to $\Rnp$).
Here, we use a convex geometry technique to prove Theorem~\ref{BMcovol} stating that actually the covolumes of the sets $P_t$ are {\sl logarithmically convex}.
Since in this case the sets $L_t$ are exactly the geometric means $K_t^\times$ of $K_0$ and $K_1$, this implies the dual Brunn-Minkowski inequality for their Monge-Amp\`ere capacities,
\begin{equation}\label{logconvcap}\Capa(K_t^\times,\D^n)\le \Capa(K_0,\D^n)^{1-t} \Capa(K_1,\D^n)^t,\quad 0<t<1.\end{equation}
In addition, an equality here occurs for some $t\in (0,1)$ if and only if $K_0=K_1$.

It is quite interesting that the volume of $K_t^\times$ satisfies the opposite Brunn-Minkowski inequality (\ref{volBM}), i.e., it is logarithmically {\sl concave}. Furthermore, so are the standard logarithmic capacity in the complex plane and the Newtonian capacity in $\Rn$ with respect to the Minkowski addition \cite{B1}, \cite{B2}, \cite{Ran}. The difference here is that the relative Monge-Amp\`ere capacity is, contrary to the logarithmic or Newton capacities, a local notion,  which leads to the dual Brunn-Minkowski inequality  (\ref{logconvcap}), exactly like for the covolumes of coconvex bodies \cite{KT}.

A natural question that remains open is to know whether the logarithmic convexity of the relative Monge-Amp\`ere capacities is also true in the general, non-toric case. No non-trivial examples of (\ref{logconvcap}) in this setting are known so far.

\section{Level sets as holomorphic hulls}\label{sect:holhulls}

Let $K_0,K_1$ be two non-pluripolar compact subsets of a bounded hyperconvex domain $\Omega\subset\Cn$, and let $L_t=L_{t,\Omega}$ be the interpolating sets defined by (\ref{L_t}) for the geodesic $u_t=u_{t,\Omega}$ with the endpoints $u_j=\omega_{K_j,\Omega}$. We start with an observation that if the sets $K_j$ are polynomially convex, then the sets $L_t$
are actually independent of the choice of the domain $\Omega$ containing $K_0\cup K_1$.

\begin{lemma}\label{indep} If $\Omega'$ and $\Omega''$ are bounded hyperconvex neighborhoods of non-pluripolar, polynomially convex, compact sets $K_0$ and $K_1$, then $L_{t,\Omega'}=L_{t,\Omega''}$.
\end{lemma}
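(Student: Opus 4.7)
The plan is to reduce to the nested case and then handle two inclusions in sequence. Given hyperconvex neighbourhoods $\Omega'$ and $\Omega''$ of $K_0\cup K_1$, choose a hyperconvex neighbourhood $\Omega_0\Subset\Omega'\cap\Omega''$ of the same union (for instance, a sublevel set of a psh exhaustion of $\Omega'\cap\Omega''$). By transitivity it is enough to prove $L_{t,\Omega_0}=L_{t,\tilde\Omega}$ whenever $\Omega_0\subset\tilde\Omega$ are both hyperconvex neighbourhoods of $K_0\cup K_1$. Throughout I shall write $u_j^0=\omega_{K_j,\Omega_0}$, $\tilde u_j=\omega_{K_j,\tilde\Omega}$, with corresponding geodesics $u_t^0$ on $\Omega_0\times A$ and $\tilde u_t$ on $\tilde\Omega\times A$.

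The easier inclusion $L_{t,\Omega_0}\subset L_{t,\tilde\Omega}$ is obtained by restriction. By the classical monotonicity of the relative extremal function under enlargement of the ambient domain, $\tilde u_j|_{\Omega_0}\leq u_j^0$ on $\Omega_0$, and polynomial convexity forces both functions to equal $-1$ on $K_j$ (and to be continuous up to the respective boundaries). Restricting any $v\in W(\tilde u_0,\tilde u_1)$ to $\Omega_0\times A$ produces a psh function whose limsups on $A_j$ are bounded by $\tilde u_j|_{\Omega_0}\leq u_j^0$, so it lies in $W(u_0^0,u_1^0)$; taking suprema gives $\tilde u_t|_{\Omega_0\times A}\leq u_t^0$. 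Since the constant $-1$ is admissible in both envelopes, $u_t\geq -1$ in either domain, so $u_t^0(z)=-1$ at a point $z\in\Omega_0$ forces $\tilde u_t(z)=-1$ there, and hence $L_{t,\Omega_0}\subset L_{t,\tilde\Omega}$.

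The reverse inclusion $L_{t,\tilde\Omega}\subset L_{t,\Omega_0}$ is where the real content sits, and it splits into two claims: (i) $L_{t,\tilde\Omega}\subset\Omega_0$, and (ii) $u_t^0(z_0)=-1$ whenever $z_0\in\Omega_0$ satisfies $\tilde u_t(z_0)=-1$. The natural strategy for (ii) is to take a competitor $w\in W(u_0^0,u_1^0)$ which is close to $u_t^0(z_0)$ at $(z_0,e^t)$, and build a psh extension $\tilde w$ on $\tilde\Omega\times A$ lying in $W(\tilde u_0,\tilde u_1)$ with $\tilde w(z_0,e^t)\geq w(z_0,e^t)$. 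Feeding this back into the envelope defining $\tilde u_t$ would give $\tilde u_t(z_0)\geq u_t^0(z_0)$, which combined with the previous inequality forces equality on all of $\Omega_0$, and the matching of the $-1$ level sets follows.

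The main obstacle is the mismatch of boundary values on $A_j$: $w$ is only required to satisfy $\limsup w\leq u_j^0$, whereas admissibility in the larger domain demands $\limsup\leq\tilde u_j\leq u_j^0$, with strict inequality typically holding on $\Omega_0\setminus K_j$. Consequently a naive gluing of $w$ to $0$ across $\partial\Omega_0\times A$ does not produce an admissible competitor. To overcome this I would exploit polynomial convexity: $u_j^0-\tilde u_j$ vanishes on $K_j$, both are continuous, and the hyperconvexity of $\Omega_0\Subset\tilde\Omega$ supplies a psh exhaustion $\rho$ of $\tilde\Omega$ which is strictly negative on $\overline{\Omega_0}$. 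Using $\rho$ one constructs a family of psh correctors $\chi_\epsilon$ on $\tilde\Omega\times A$ that equal $w-\epsilon$ near $(z_0,e^t)$ and are dominated by $\tilde u_j$ on $A_j$, obtained essentially as $\max$-regularisations $\max\{w-\epsilon,\,C\rho\}$ extended by $C\rho$ outside $\Omega_0$ for $C>0$ large depending on $\epsilon$. For $z_0$ well inside $\Omega_0$ and $\epsilon$ small, $\chi_\epsilon(z_0,e^t)=w(z_0,e^t)-\epsilon$, and passing to the envelope yields $\tilde u_t(z_0)\geq w(z_0,e^t)-\epsilon$; letting $\epsilon\to 0$ and $w\to u_t^0$ at $z_0$ gives (ii). Claim (i) then follows because $L_{t,\tilde\Omega}$, being equal to $L_{t,\Omega_0}$ by (ii) on $\Omega_0$, must be a priori contained in $\Omega_0$: any point of $L_{t,\tilde\Omega}\setminus\overline{\Omega_0}$ would contradict the above identification applied to a slightly enlarged $\Omega_0$. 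The construction of the corrector family and the verification of its psh-ness across $\partial\Omega_0\times A$ is the hard technical step.
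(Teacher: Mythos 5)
Your reduction to a nested pair and the easy inclusion $L_{t,\Omega_0}\subset L_{t,\tilde\Omega}$ are fine and agree with the paper. The gap is in the hard inclusion: the inequality your construction is designed to deliver, namely $\tilde u_t(z_0)\ge u_t^0(z_0)-\epsilon$ for \emph{every} $z_0$ well inside $\Omega_0$ and every $\epsilon>0$, is simply false, and nothing in your argument uses the hypothesis $\tilde u_t(z_0)=-1$. Take $K_0=K_1=K=\overline{B(0,r)}$ and concentric balls $\Omega_0\Subset\tilde\Omega$: then $u_t^0=\omega_{K,\Omega_0}$ and $\tilde u_t=\omega_{K,\tilde\Omega}$ for all $t$, and $\omega_{K,\tilde\Omega}<\omega_{K,\Omega_0}$ strictly on $\Omega_0\setminus K$. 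Accordingly the corrector $\chi_\epsilon=\max\{w-\epsilon,C\rho\}$ cannot be admissible for the pair $(\tilde u_0,\tilde u_1)$: since $\max\{a,b\}\le\tilde u_j$ forces $w-\epsilon\le\tilde u_j$, you would need $w-\epsilon\le\tilde u_j$ near $\Omega_0\times A_j$, while $w$ may be as large as $u_j^0$ there and the gap $u_j^0-\tilde u_j$ is a fixed positive function on $\Omega_0\setminus K_j$, not something an additive constant $\epsilon$ can absorb. The gluing across $\partial\Omega_0\times A$ also fails: it requires $C\rho\ge w-\epsilon$ near $\partial\Omega_0$, which is incompatible with the large $C$ needed to ensure $C\rho\le\tilde u_j$ (e.g. $C\rho\le-1$ on $K_j$), because for $t$ in the middle of $(0,1)$ competitors such as $\max\{u_0^0-\log|\zeta|,\,u_1^0+\log|\zeta|-1\}$ are $\ge-\min(t,1-t)>-1$ near $\partial\Omega_0$. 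Finally, your claim (i) is not established: applying the identification to a slightly enlarged $\Omega_0'$ only yields $L_{t,\tilde\Omega}\cap\Omega_0'=L_{t,\Omega_0'}$ and never excludes points of $L_{t,\tilde\Omega}$ outside $\Omega_0$.

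The missing idea is to reverse the direction of domination and to use a multiplicative, not additive, correction, which is where polynomial convexity enters quantitatively. Since $u_{j,\tilde\Omega}$ is continuous and equals $-1$ exactly on $K_j$, the number $\delta=-\inf\{u_{j,\tilde\Omega}(z):z\in\partial\Omega_0,\ j=0,1\}$ satisfies $\delta<1$. The function $\hat v=\frac1{1-\delta}\bigl(\widehat u_{\tilde\Omega}+\delta\bigr)$ is maximal on $\Omega_0\times A$, is $\ge0$ on $\partial\Omega_0\times A$, still tends to $-1$ on $K_j\times A_j$, and its boundary values on $\Omega_0\times A_j$, being maximal off $K_j$, $\ge0$ on $\partial\Omega_0$ and $\ge-1$ with value $-1$ on $K_j$, dominate $\omega_{K_j,\Omega_0}$ by comparison on $\Omega_0\setminus K_j$. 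Hence $\hat v\ge u$ for every competitor $u\in W(u_0^0,u_1^0)$, so $u_t^0\le\frac1{1-\delta}(\tilde u_t+\delta)$ on $\Omega_0$; at a point where $\tilde u_t=-1$ this gives $u_t^0\le-1$, i.e. the hard inclusion. Your additive $\epsilon$-perturbation cannot be repaired into this, because the whole point is the affine rescaling exploiting $\tilde u_j\ge-\delta>-1$ on $\partial\Omega_0$, whereas subtracting a constant leaves the fixed-size gap $u_j^0-\tilde u_j$ untouched.
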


\begin{proof} By the monotonicity of $\Omega\mapsto u_{t,\Omega}$, it suffices to show the equality for $\Omega'\Subset\Omega''$.
Since $u_{t,\Omega''}\le u_{t,\Omega'}$, the inclusion $L_{t,\Omega'}\subset L_{t,\Omega''}$ is evident.
Denote now
$$\delta=-\inf\{u_{j,\Omega''}(z):\: z\in\partial\Omega',\ j=0,1\}\in(0,1).$$
Recall that the geodesics $u_{t,\Omega}$ come from the maximal plurisubharmonic functions $\widehat u_{\Omega}$ in $\Omega\times A$ for the annulus $A$ bounded by the circles $A_j$ where $\log|\zeta|=j$.
Then the function
$$\hat v:=\frac1{1-\delta}(\widehat u_{\Omega''}+\delta)\in\PSH(\Omega'\times A)\cap C(\overline{\Omega'\times A})$$
satisfies $(dd^c\hat v)^{n+1}=0$ in $\Omega'\times A$ and
\begin{equation}\label{bdv}\lim \hat v(z,\zeta)= -1\ {\rm \ as\ } (z,\zeta)\to K_j\times A_j.\end{equation}
Moreover, since $\hat v\ge 0$ on $\partial \Omega'\times A$ and its restriction to each $A_j$ satisfies $(dd^c\hat v)^{n}=0$ on $A_j\setminus K_j$, the boundary conditions (\ref{bdv}) imply
$$ \lim \hat v(z,\zeta)\ge u_{j,\Omega'}\ {\rm as\ } \zeta\to A_j.$$
Therefore, we have $\hat v\ge \hat u$ in the whole $\Omega'\times A$.
 If $z\in L_{t,\Omega''}$, this gives us $-1\ge u_{t,\Omega'}(z)$ and so, $z\in L_{t,\Omega'}$, which completes the proof.
\end{proof}

\medskip

Next step is comparing the sets $L_t$   with other interpolating sets, $K_t$, defined as follows.
Set
\begin{equation}\label{hatK} \widehat K=\widehat K(\Omega)=\{(z,\zeta)\in \Omega\times A:\: u(z,\zeta)\le M(u)\ \forall u\in \PSH_-(\Omega\times A)\},\end{equation}
where $M(u)=\max_j M_j(u)$ and
$$ M_j(u)=\limsup\,u(z,\zeta)\ {\rm as \ }(z,\zeta)\to K_j\times A_j, \quad j=0,1.$$
Note that the set $\widehat K$ will not change if one replaces $\PSH_-(\Omega\times A)$ by the collection of all bounded from above (or just bounded) plurisubharmonic functions on $\Omega\times A$.

Denote by $\widehat K_\zeta$ the section of $\widehat K$ over $\zeta\in A$:
$$\widehat K_\zeta=\widehat K_\zeta(\Omega) = \{z\in\Omega:\: (z,\zeta)\in \widehat K\},\quad \zeta\in A.$$
The set $\widehat K$ is invariant under rotation of the $\zeta$-variable, so $\widehat K_\zeta$ depends only on $|\zeta|$.
We set then
$$K_t=\widehat K_{e^t},\quad 0<t<1.$$

\begin{theorem}\label{prop1}If $K_j$ are non-pluripolar, polynomially convex compact subsets of $\Omega$, then $L_t=K_t$ for all $0<t<1$. \end{theorem}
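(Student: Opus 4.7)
The plan is to prove both inclusions separately, each time by inserting a well-chosen test function into the defining condition on the opposite side.

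For the inclusion $K_t \subseteq L_t$, I would apply the definition (\ref{hatK}) of $\widehat K$ to the geodesic envelope $\widehat u$ itself. The polynomial convexity of the $K_j$ gives $u_j\equiv -1$ on $K_j$, and the uniform convergence $u_t\to u_j$ as $t\to j$ (which, as recalled just before the theorem, upgrades to continuity of $\widehat u$ on $\overline\Omega\times\overline A$) yields $\widehat u(z,\zeta)\to -1$ as $(z,\zeta)\to K_j\times A_j$. Hence $M_0(\widehat u)=M_1(\widehat u)=-1$, so $M(\widehat u)=-1$. One also needs $\widehat u\in\PSH_-(\Omega\times A)$: the bound $\widehat u\ge -1$ follows from the fact that the constant $-1$ lies in $W(u_0,u_1)$ (since $u_j\ge -1$), while $\widehat u\le 0$ follows by slicewise subharmonicity --- for fixed $z$ and $u\in W(u_0,u_1)$, $u(z,\cdot)$ is subharmonic on $A$ with boundary limsup $\le u_j(z)\le 0$, hence $\le 0$ on $A$. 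Applying $(z_0, e^t)\in\widehat K$ with $u=\widehat u$ then gives $u_t(z_0)=\widehat u(z_0, e^t)\le M(\widehat u) = -1$, which together with $u_t\ge -1$ forces $u_t(z_0)=-1$.

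For the reverse inclusion $L_t\subseteq K_t$, I would exploit the flexibility of $W(u_0,u_1)$ by a rescaling argument. Given $z_0 \in L_t$ and any test function $u \in \PSH_-(\Omega\times A)$, set $m := M(u)\le 0$. The case $m=0$ is immediate, so assume $m<0$ and put $v := u/|m|$, which is plurisubharmonic, nonpositive, and satisfies $M_j(v) \le -1$. Granting for a moment that $v \in W(u_0, u_1)$, the definition (\ref{upenv}) gives $v\le\widehat u$ on $\Omega\times A$, and evaluation at $(z_0, e^t)$, where $\widehat u(z_0, e^t)=u_t(z_0)=-1$, yields $u(z_0, e^t)\le m = M(u)$. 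As $u$ was arbitrary, $(z_0, e^t)\in\widehat K$.

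The one substantive verification --- and the main obstacle --- is establishing that $v$ indeed lies in $W(u_0, u_1)$, i.e.\ that $\limsup_{\zeta\to A_j} v(z, \zeta) \le u_j(z)$ holds for \emph{every} $z \in \Omega$, not merely for $z \in K_j$. To this end I would form the slicewise limsup $v_j(z) := \limsup_{\zeta\to A_j} v(z,\zeta)$ and its upper semicontinuous regularization $v_j^*$. Writing $v_j$ as the monotone decreasing limit, as $\epsilon\downarrow 0$, of the functions $z\mapsto \sup\{v(z,\zeta): \operatorname{dist}(\zeta, A_j)<\epsilon\}$ --- each of whose USC regularizations is plurisubharmonic thanks to the joint plurisubharmonicity and boundedness above of $v$ --- shows $v_j^*\in\PSH_-(\Omega)$. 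The hypothesis $M_j(v)\le -1$ translates into $v_j^*\le -1$ on $K_j$, so the definition (\ref{initial}) of the relative extremal function forces $v_j^*\le\omega_{K_j,\Omega} = u_j$ throughout $\Omega$. Consequently $\limsup_{\zeta\to A_j}v(z,\zeta) = v_j(z)\le v_j^*(z)\le u_j(z)$, closing the argument.
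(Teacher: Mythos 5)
Your proof is correct and takes essentially the same route as the paper: the inclusion $K_t\subset L_t$ is obtained exactly as there, by testing the definition of $\widehat K$ with $\widehat u$ itself, and for $L_t\subset K_t$ you normalize an arbitrary $u\in\PSH_-(\Omega\times A)$ into a subgeodesic and compare with $\widehat u$, just as the paper does (it scales $u$ and subtracts $(1-\log|\zeta|)M_0(u)+\log|\zeta|\,M_1(u)+1$, which even yields the slightly sharper bound $(1-t)M_0(u)+tM_1(u)$, while you simply divide by $|M(u)|$). Your verification that the joint limsup condition near $K_j\times A_j$ upgrades, via the extremal property of $\omega_{K_j,\Omega}$, to the slicewise condition $\limsup_{\zeta\to A_j}v(z,\zeta)\le u_j(z)$ for all $z\in\Omega$ is exactly the step the paper leaves implicit in the phrase ``$\phi_t$ is a subgeodesic''.
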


\begin{proof}
First, we prove the inclusion
\begin{equation} \label{incl1} L_t \subset K_t, \end{equation}
that is,
\begin{equation} \label{Mbound} u(z,t)\le M(u)\quad \forall z\in L_t\end{equation}
for all $u\in\PSH_-(\Omega\times A)$.
By the scalings $u\mapsto c\, u$, we can assume that
$u-\min_j M_j(u)\le 1$ on $\Omega\times A$.
Then the function
$$\phi(z,\zeta)=u(z,\zeta)- (1-\log|\zeta|)M_0(u) -(\log|\zeta|) M_1(u)-1$$
belongs to $\PSH_-(\Omega\times A)$ and
$$\limsup\phi(z,\zeta)\le -1 {\rm\ as \ }(z,\zeta)\to K_j\times A_j.$$
In other words, $\phi_t(z):=\phi(z,e^t)$ is a subgeodesic for $u_0$ and $u_1$, so $\phi_t\le u_t$. Therefore, $\phi_t\le -1$ on $L_t$, which gives us (\ref{Mbound}).

To get the reverse inclusion, assume $z\in K_t$. Then, by definition of $\widehat K$, we get $u_t(z)\le M(u_t)=-1$ and, since $u_t\ge -1$ everywhere, $u_t(z)=-1$.
\end{proof}

\medskip

The set $\widehat K$ can actually be represented as a holomorphic hull of the set
$$K^A=(K_0\times A_0) \cup (K_1\times A_1),$$ which is similar to what one gets in the classical interpolation theory. This can be concluded by standard arguments relating plurisubharmonic and holomorphic hulls (see, e.g., \cite{Range}).

\begin{proposition}\label{loc} Let $K_0,K_1$ be two non-pluripolar, polynomially convex compact subsets of a bounded hyperconvex domain $\Omega\subset\Cn$. Then the set $\widehat K$ defined by (\ref{hatK}) is the holomorphic hull of the set $K^A$
with respect to the collection of all functions holomorphic on $\Omega\times\C_*$ (here $\C_*=\C\setminus\{0\}$).
\end{proposition}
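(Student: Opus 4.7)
The plan is to use the standard equivalence, on Stein domains, between holomorphic and plurisubharmonic hulls of compact sets, and then to identify this hull with $\widehat K$.

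First I would observe that $\Omega \times \C_*$ is a pseudoconvex (in fact Stein) domain, being the product of the hyperconvex $\Omega$ with the one-dimensional domain of holomorphy $\C_* = \C \setminus \{0\}$. By the Bremermann--Oka theorem (see \cite{Range}), the $\cO(\Omega \times \C_*)$-hull $\widehat{K^A}_\cO$ of the compact set $K^A$ coincides with its $\PSH(\Omega \times \C_*)$-hull, so the proposition reduces to showing $\widehat K = \widehat{K^A}_\cO$.

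For the inclusion $\widehat K \subseteq \widehat{K^A}_\cO$, I would fix $p \in \widehat K$ and $f \in \cO(\Omega \times \C_*)$. By Lemma \ref{indep} and Theorem \ref{prop1}, the sections $L_t$ of $\widehat K$ are independent of the enclosing domain and are compactly contained in $\Omega$, so one may pass to a bounded hyperconvex $\Omega' \Subset \Omega$ containing $K_0 \cup K_1$ with $p \in \Omega' \times A$, on which $f$ is bounded. Then $\tfrac{1}{N}\log|f| - C$ lies in $\PSH_-(\Omega' \times A)$ for $C$ large and every $N \ge 1$, while upper semicontinuity of $|f|$ on the compact $K^A$ gives $M(\log|f|) \le \log\|f\|_{K^A}$. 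Applying (\ref{hatK}) in $\Omega'$ and letting the constant cancel yields $|f(p)| \le \|f\|_{K^A}$.

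The reverse inclusion is the main obstacle. Given a bounded $u \in \PSH(\Omega \times A)$ with $M(u) \le c$ (using the extension of (\ref{hatK}) to bounded PSH functions noted after it), one needs to produce $v \in \PSH(\Omega \times \C_*)$ with $\sup_{K^A} v \le c$ that dominates $u$ near the point of interest. A naive extension of $u$ by the constant $c$ across $\Omega \times (A_0 \cup A_1)$ will typically fail upper semicontinuity, because the hypothesis $M_j(u) \le c$ only controls $u$'s limsup at $K_j \times A_j$ and not on the rest of $\Omega \times A_j$. The standard remedy I would implement is to first replace $u$ by the maximal PSH function on $\Omega \times A$ dominated by $\max(u, c)$ and having limsup $\le c$ at $K_j \times A_j$; by the Walsh-type continuity argument recalled in the introduction, together with polynomial convexity of $K_j$ (which forces the relative extremal boundary values to drop to $c$ on all of $\Omega \times A_j$), this Perron envelope is continuous on $\overline{\Omega \times A}$ and can be glued with the constant $c$ on $\Omega \times (\C_* \setminus A)$ to produce the required PSH extension to $\Omega \times \C_*$. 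The Bremermann--Oka identification from the first step then closes the argument.
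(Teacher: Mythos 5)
Your first half is correct and essentially the paper's argument: reduce, via pseudoconvexity of $\Omega\times\C_*$, to the $\PSH(\Omega\times\C_*)$-hull, shrink to $\Omega'\Subset\Omega$ using Lemma~\ref{indep} and Theorem~\ref{prop1}, and test with $\log|f|$, which is bounded above on $\Omega'\times A$. The genuine gap is in the reverse inclusion. The Perron envelope you propose (the largest psh $w\le\max(u,c)$ on $\Omega\times A$ with $\limsup w\le c$ at $K_j\times A_j$) does dominate $u$, but nothing forces its boundary limsup to be $\le c$ on $(\Omega\setminus K_j)\times A_j$: the constraint only sees $K_j\times A_j$, and since the envelope majorizes $u$, which may be close to $0$ near $(\Omega\setminus K_j)\times A_j$, its inner limit there typically exceeds $c$. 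Your parenthetical justification is false: polynomial convexity of $K_j$ gives $\omega_{K_j,\Omega}=-1$ \emph{on} $K_j$ and continuity of $\omega_{K_j,\Omega}$, but this function is $>-1$ off $K_j$ and tends to $0$ at $\partial\Omega$; correspondingly, the natural envelope has boundary values on $\Omega\times A_j$ proportional to $\omega_{K_j,\Omega}(z)$, not the constant $c$. Hence gluing with the constant $c$ on $\Omega\times(\C_*\setminus A)$ destroys upper semicontinuity and the sub-mean value property at points of $(\Omega\setminus K_j)\times A_j$, so no psh function on $\Omega\times\C_*$ is produced and the inclusion $\widehat K_\cF\subset\widehat K$ is not established.

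The paper avoids dominating an arbitrary $u$ altogether. By Theorem~\ref{prop1}, $\widehat K$ has sections $K_t=L_t=\{u_t=-1\}$, so for the reverse inclusion it suffices to exhibit \emph{one} function: the geodesic envelope $\hat u$ is extended to $\Omega\times\C_*$ by $\hat U(z,\zeta)=u_0(z)-\log|\zeta|$ for $\log|\zeta|\le 0$ and $\hat U(z,\zeta)=u_1(z)+\log|\zeta|-1$ for $\log|\zeta|\ge 1$ -- an extension by $z$-dependent psh functions matching the true boundary values $u_j$ of $\hat u$ on $\Omega\times A_j$ (checked by comparing with $\max\{u_0(z)-\log|\zeta|,\,u_1(z)+\log|\zeta|-1\}$), not by a constant. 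Since $\hat U=-1$ on $K^A$, any $(z^*,\zeta^*)$ in the $\PSH(\Omega\times\C_*)$-hull satisfies $u_t(z^*)\le -1$ with $t=\log|\zeta^*|$, hence $z^*\in L_t=K_t$, i.e.\ $(z^*,\zeta^*)\in\widehat K$. If you prefer your formulation of ``dominating every bounded $u$ with $M(u)\le c$,'' note that this is exactly what the subgeodesic argument in the proof of Theorem~\ref{prop1} provides, so the correct fix is to invoke that theorem (or reproduce its comparison with the geodesic) rather than a constant-value gluing.
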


\begin{proof} The domain $\Omega\times\C_*$ is pseudoconvex, so it suffices to show that $\widehat K$ is the
$\cF$-hull $\widehat K_\cF$ of the set (\ref{hull}) with respect to $\cF=\PSH(\Omega\times\C_*)$.

Take any hyperconvex domain $\Omega'$ such that $K_0\cup K_1\subset \Omega'\Subset\Omega$.
Since $\cF$ forms a subset of the collection of all bounded from above psh functions on $\Omega'\times A$, we have $\widehat K':=\widehat K(\Omega')\subset \widehat K_\cF$. Moreover, by Lemma~\ref{indep} and Theorem~\ref{prop1}, we have $\widehat K'=\widehat K$, which implies $\widehat K\subset \widehat K_\cF$.

  Let $u_t$ be the geodesic of $u_0,u_1$ in $\Omega$. Then its psh image $\hat u(z,\zeta)$  can be extended to $\Omega\times \C_*$ as $\hat U(z,\zeta)=u_0(z)-\log|\zeta|$ for $-\infty<\log|\zeta|\le 0$ and $\hat U(z,\zeta)=u_1(z)+\log|\zeta|-1$ for $1\le\log|\zeta|< \infty$. Indeed, the function
$$\hat v(z,\zeta) =\max\{u_0(z)-\log|\zeta|, u_1(z)+\log|\zeta|-1\}$$
is psh on $\Omega\times A$, continuous on $\Omega\times \overline A$, and equal to $u_j$ on $\Omega\times A_j$. Therefore, it coincides with $\hat U$ on $\Omega\times(\C_*\setminus A)$, Since $\hat v\le \hat u$ on $\Omega\times A$ and $\hat v=\hat u$ on $\Omega\times \partial A$, the claim is proved.

Let $(z^*,\zeta^*)\in \widehat K_\cF$. By the definition of $\widehat K_\cF$, since $\hat U\in\PSH(\Omega\times\C_*)$,
$$\hat u(z^*,\zeta^*)=\hat U(z^*,\zeta^*)\le \sup \{\hat U(z,\zeta):\: (z,\zeta)\in K^A\}=-1,$$
so $z^*\in \widehat K_t$ with $t=\log|\zeta^*|$.
\end{proof}

\medskip
Finally, since the sets $L_t$ are independent of the choice of $\Omega$, we get the following description of the interpolated sets $K_t$.

\begin{corollary}\label{cor} Let $K_0,K_1$ be two non-pluripolar, polynomially convex compact subsets of $ \Cn$ and let $\Omega$ be a bounded hyperconvex domain containing $K_0\cup K_1$. Denote by $u_t$ the geodesic of the functions $u_j=\omega_{K_j,\Omega}$, $j=0,1$. Then for any $\zeta\in A$,
$$K_t=\{z\in\Omega:\: u_{t}(z)=-1\}=\{z\in\Cn: |f(z,\zeta)|\le\|f\|_{K^A}\ \forall f\in\cO(\Cn\times \C_*)\}$$
with $t=\log|\zeta|$.
\end{corollary}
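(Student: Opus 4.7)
My plan combines the three preceding results to read the corollary as a translation from the intrinsic, $\Omega$-dependent formulation of Proposition~\ref{loc} to the $\Omega$-free one using $\cO(\Cn \times \C_*)$. The first equality $K_t=\{z\in\Omega: u_t(z)=-1\}$ is exactly Theorem~\ref{prop1}. Write
$$H:=\{z\in\Cn:|f(z,\zeta)|\le\|f\|_{K^A}\ \forall f\in\cO(\Cn\times\C_*)\},\qquad t=\log|\zeta|,$$
so that the remaining task is to show $K_t=H$.

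By Proposition~\ref{loc}, together with the fact that the plurisubharmonic and holomorphic hulls coincide on the pseudoconvex domain $\Omega\times\C_*$, one has
\begin{equation}\label{eq:Kt-formula}
K_t=\{z\in\Omega:|f(z,\zeta)|\le\|f\|_{K^A}\ \forall f\in\cO(\Omega\times\C_*)\}.
\end{equation}
The inclusion $K_t\subset H$ is then immediate: every $f\in\cO(\Cn\times\C_*)$ restricts to an element of $\cO(\Omega\times\C_*)$, and $K_t\subset\Omega$, so \eqref{eq:Kt-formula} forces $|f(z,\zeta)|\le\|f\|_{K^A}$ whenever $z\in K_t$.

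For the reverse inclusion $H\subset K_t$ the idea is to exploit the freedom provided by Lemma~\ref{indep}: the set $K_t=L_t$ is independent of the choice of hyperconvex neighborhood of $K_0\cup K_1$. Given $z^*\in H$, I choose a polydisk $D=\D_R^n$ large enough to contain $K_0\cup K_1\cup\{z^*\}$; this $D$ is itself a bounded hyperconvex neighborhood of $K_0\cup K_1$, and by Lemma~\ref{indep} the interpolating set computed on $D$ agrees with the original $K_t$. Applying \eqref{eq:Kt-formula} with $D$ in place of $\Omega$ reduces the problem to checking $|f(z^*,\zeta)|\le\|f\|_{K^A}$ for every $f\in\cO(D\times\C_*)$.

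This last step is the only substantive one, and is handled by Runge approximation. Expanding $f\in\cO(D\times\C_*)$ as a Laurent series $\sum_{k}f_k(z)\zeta^k$ with $f_k\in\cO(D)$, approximating each $f_k$ uniformly on compact subsets of the polydisk $D$ by polynomials in $z$, and truncating the sum, produces approximants that belong to $\cO(\Cn\times\C_*)$ and converge to $f$ uniformly on any compact subset of $D\times\C_*$ containing $K^A$ and $(z^*,\zeta)$. Passing to the limit in the defining inequality of $H$ yields the required bound, hence $z^*\in K_t$. The principal obstacle is precisely this polynomial approximation on $D\times\C_*$; the rest is formal bookkeeping assembling Theorem~\ref{prop1}, Proposition~\ref{loc}, and Lemma~\ref{indep}.
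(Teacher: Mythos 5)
Your proposal is correct and takes essentially the same route as the paper, which deduces the corollary from Theorem~\ref{prop1}, Proposition~\ref{loc} and the $\Omega$-independence of Lemma~\ref{indep} (choosing a large polydisk through the given point). The only difference is that you spell out the Runge/Laurent-series approximation needed to pass from $\cO(\Omega\times\C_*)$ to $\cO(\Cn\times\C_*)$, a step the paper's one-line deduction leaves implicit.
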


{\it Remark.} Note that the considered hulls are taken with respect to functions holomorphic in $\Cn\times \C_*$ and not in $\C^{n+1}$ (that is, not the polynomial hulls). This reflects the fact that in the definition of $K^A$, the circles $A_0$ and $A_1$ may be interchanged.
Since for any polynomial $P(z,\zeta)$ and any $\zeta$ inside the disc $|\zeta|<e$, we have $|P(z,\zeta)|\le \max
\{|P(z,\xi)|: \: |\xi|=e\}$, each section of the {\sl polynomial} hull of $K^A$ must contain $K_1$, so such a hull does not provide any interpolation between $K_0$ and $K_1$.

\section{Log-convexity of Monge-Amp\`ere capacities}

Let, as before, $K_0$ and $K_1$ be non-pluripolar, polynomially convex compact subsets of a bounded hyperconvex domain $ \Omega\Subset\Cn$, $u_t$ be the geodesic between $u_j= \omega_{K_j,\Omega}$, and let $K_t$  be the corresponding interpolating sets as described in Section~\ref{sect:holhulls}. As was mentioned, their relative Monge-Amp\`ere capacities satisfy the inequality
$$\Capa(K_t,\Omega)\le(1-t)\,\Capa(K_0,\Omega) +t\,\Capa(K_1,\Omega).$$

Let now $\Omega=\D^n$ and assume the sets $K_j$ to be Reinhardt. The polynomial convexity of $K_j$ means then that their logarithmic images
$$Q_j=\Log K_j=\{s\in\Rnm:\: (e^{s_1},\ldots,e^{s_n})\in K_j\}$$
are complete convex subsets of $\Rnm$, i.e., $Q_j+\Rnm\subset\Rnm$; when this is the case, we will also say that $K_j$ is complete logarithmically convex. In this situation, the sets $K_t$ are, as in the classical interpolation theory, the geometric means of $K_j$. Note however that our approach extends the classical -- convex -- setting to a wider one.

\begin{proposition} The interpolating sets $K_t$ of two non-pluripolar, complete logarithmically convex, compact Reinhardt sets $K_0, K_1\subset \D^n$ coincide with
$$K_t^\times:=K_0^{1-t}K_1^t=\{z:\: |z_l|=|\eta_l|^{1-t} |\xi_l|^{t}, \ 1\le l\le n,\ \eta\in K_0,\ \xi\in K_1\}.$$
\end{proposition}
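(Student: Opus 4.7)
My plan goes as follows.

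Because of the Reinhardt structure of $K_0$ and $K_1$ and the full torus invariance of the polydisc $\D^n$, each relative extremal function $u_j=\omega_{K_j,\D^n}$ is invariant under the $(S^1)^n$-action on $\Cn$ and decreasing in each $|z_l|$; it therefore descends through the $\Log$ map to a convex function $v_j:\Rnm\to[-1,0]$ with $u_j=v_j\circ\Log$. The hypotheses of complete logarithmic convexity and polynomial convexity force $v_j=-1$ precisely on $Q_j=\Log K_j$ and $v_j\to 0$ on the coordinate faces of $\partial\Rnm$.

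Next I would show that the envelope $\widehat u$ itself is toric. Averaging any $u\in W(u_0,u_1)$ over the $(S^1)^{n+1}$-action on $\Omega\times A$ (rotating each $z_l$ and $\zeta$) produces another element of $W(u_0,u_1)$, still bounded above by $\widehat u$; since $\widehat u$ is the supremum of such competitors and the supremum of a family of toric psh functions is toric, $\widehat u(z,\zeta)=V(\Log z,\log|\zeta|)$ for some convex $V$ on $\Rnm\times(0,1)$. Writing $v_t(s):=V(s,t)$, the geodesic in the psh picture becomes the largest convex function on $\Rnm\times[0,1]$ whose traces are bounded by $v_0$ at $t=0$ and $v_1$ at $t=1$.

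A one-line segment argument identifies this envelope explicitly. For any such competitor $V$ and any decomposition $s=(1-t)s_0+ts_1$ with $s_0,s_1\in\Rnm$, convexity on the segment from $(s_0,0)$ to $(s_1,1)$ yields
\[
V(s,t)\le (1-t)V(s_0,0)+tV(s_1,1)\le (1-t)v_0(s_0)+tv_1(s_1),
\]
so, taking the infimum over decompositions,
\[
v_t(s)\le w_t(s):=\inf\bigl\{(1-t)v_0(s_0)+tv_1(s_1):s=(1-t)s_0+ts_1,\ s_0,s_1\in\Rnm\bigr\}.
\]
Conversely, $w_t$ is itself jointly convex in $(s,t)$ -- seen by passing to variables $\tilde s_0=(1-t)s_0$, $\tilde s_1=ts_1$, in which $w_t$ becomes an infimal convolution of two perspective functions of the convex functions $v_j$ -- and its traces at $t=0,1$ are exactly $v_0$ and $v_1$, so it is an admissible competitor. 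Hence $v_t=w_t$.

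Finally, since $v_j\ge -1$ with equality exactly on $Q_j$, one has $w_t(s)=-1$ if and only if $s$ admits a decomposition $(1-t)s_0+ts_1$ with $s_j\in Q_j$, i.e.\ $s\in(1-t)Q_0+tQ_1$. Exponentiating coordinate-wise converts this into $K_t=K_0^{1-t}K_1^t=K_t^\times$. The main obstacle I anticipate is the symmetrization/boundary step: one has to check that averaging a psh competitor over the torus preserves membership in $W(u_0,u_1)$ (which uses the toricity of the $u_j$ and of the domain) and that the resulting convex envelope $w_t$ on $\Rnm$ in fact recovers the psh envelope -- in particular that it vanishes as $s\to\partial\Rnm$. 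The latter uses $K_j\Subset\D^n$, which keeps $Q_j$ uniformly away from $\partial\Rnm$, so the inf-convolution decays to $0$ at the coordinate faces as required.
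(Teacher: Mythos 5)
Your proposal is correct in outline, and it reaches the result by a route that is close in spirit to the paper's (both are, as the paper itself says, Calder\'on's method of passing to convex functions in logarithmic coordinates) but differs in a genuine way in how the key formula is obtained. The paper splits the proof: the inclusion $K_t^\times\subset L_t$ follows from the joint convexity of $\check u_t(s)$ in $(s,t)$ (quoted from [R17a, Thm.~4.3]), and the reverse inclusion is obtained by \emph{citing} the Legendre-transform representation $\check u_t=\cL[(1-t)\max\{h_{Q_0}+1,0\}+t\max\{h_{Q_1}+1,0\}]$ from [R17b, Thm.~6.1] and then separating a point outside $Q_t=(1-t)Q_0+tQ_1$ by a linear functional $b\in\Rnp$. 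You instead re-derive that representation from the envelope definition, in its predual form: the toric geodesic is the infimal convolution of perspective functions, $w_t(s)=\inf\{(1-t)v_0(s_0)+tv_1(s_1):\, s=(1-t)s_0+ts_1\}$, which is exactly the Legendre conjugate of the paper's formula. What your route buys is self-containedness (no appeal to [R17b]); what it costs is the reduction work you yourself flag: one must check that torus averaging keeps competitors in $W(u_0,u_1)$, that $w$ is non-decreasing in $s$ and bounded so that it defines a psh competitor on all of $\D^n\times A$ (extension across the coordinate hyperplanes), and that its endpoint traces are admissible. These are routine and your sketch of them is sound (note only that $u_j$ is non-\emph{de}creasing in each $|z_l|$, not decreasing, for complete Reinhardt $K_j$).

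Two steps should be filled in to make the argument complete. First, the final equivalence $w_t(s)=-1\iff s\in(1-t)Q_0+tQ_1$ is not immediate in the ``only if'' direction, because the infimum defining $w_t$ is not obviously attained; it is attained here because the constraints $s_0,s_1\in\Rnm$ and $(1-t)s_0+ts_1=s$ force each coordinate of $(1-t)s_0$ and of $ts_1$ to lie in $[s_l,0]$, so a minimizing sequence stays in a compact box, and continuity of $v_0,v_1$ (from polynomial convexity of $K_j$) yields limit points $s_0^*\in\{v_0=-1\}$, $s_1^*\in\{v_1=-1\}$. Second, the identity $\{v_j=-1\}=Q_j$, which you assert, itself needs proof: for $\xi\in\Rnm\setminus Q_j$, strictly separate $\xi$ from the closed, complete convex set $Q_j$ by some $b\in\Rnp$ (completeness forces the separating functional into $\Rnp$) and use the competitor $\max\{-1,\,\varepsilon(\langle b,\cdot\rangle-h_{Q_j}(b))-1\}$, with $\varepsilon$ small, to get $v_j(\xi)>-1$. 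This is essentially the same separation argument the paper performs for $Q_t$, just relocated to the endpoints; once these two points are added, your proof is complete.
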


\begin{proof} We prove this by using the representation of the sets $K_t$ as $L_t=\{z:\: u_t(z)=-1\}$ and a formula for the geodesics in terms of the Legendre transform \cite{Gu}, \cite{R17a}.  By and large, this is Calder\'{o}n's  method.

As was noted in \cite[Thm. 4.3]{R17a}, the inclusion $K_t^\times\subset L_t$ follows from convexity of the function $\check u_t(s)=u_t (e^{s_1},\ldots,e^{s_n})$ in $(s,t)\in \Rnm\times (0,1)$ since $s\in \log K_t^\times$ implies $\check u_t(s)\le -1$. To prove the reverse inclusion, we use a representation for $\check u_t$ given by \cite[Thm. 6.1]{R17b}:
$$ \check u_t=\cL[(1-t)\max\{h_{Q_0}+1,0\} + t \max\{h_{Q_1}+1,0\}],\quad 0<t<1,$$
where
$$ \cL[f](y)=\sup_{x\in\Rn}\{\langle x,y\rangle -f(x)\}$$
is the {\it Legendre transform} of $f$,
$$h_Q(a)=\sup_{s\in Q} \langle a,s\rangle,\quad a\in\Rnp
$$
is the support function of a convex set $Q\subset\Rnm$, and
$Q_j=\Log K_j$.

Let $z\notin K_t^\times$, then one can assume that none of its coordinates equals zero, so the corresponding point $\xi=(\log|z_1|,\ldots,\log|z_n|)\in\Rnm$ does not belong to $Q_t=(1-t)Q_0+tQ_1$. Therefore there exists $b\in\Rnp$ such that
$$ \langle b,\xi\rangle > h_{Q_t}(b)=(1-t)h_{Q_0}(b)+ t\, h_{Q_1}(b);$$
by the homogeneity, one can assume $h_{Q_0}(b),h_{Q_1}(b)>-1$ as well. Then
\begin{eqnarray*}
\check u_t(\xi) &=& \sup_{a\in\Rnp}[\langle a,\xi\rangle - (1-t)\max\{h_{Q_0}(a)+1,0\} - t \max\{h_{Q_1}(a)+1,0\}]\\
&>& (1-t)[h_{Q_0}(b)-(h_{Q_0}(b)+1)] + t [h_{Q_1}(b)-(h_{Q_1}(b)+1)]=1,
\end{eqnarray*}
so $\xi$ does not belong to $\Log L_t$ and consequently $z\notin L_t$.
\end{proof}

\medskip

The crucial point for the Reinhardt case is a formula from \cite[Thm. 7]{ARZ} (see also \cite{R17b}) for the Monge-Amp\`ere capacity of complete logarithmically convex compact sets $K\subset\D^n$:
$$\Capa (K,\D^n)=n!\,\Covol(Q^\circ):=n!\, \Vol({\Rnp\setminus Q^\circ}),$$
where
$$Q^\circ=\{x\in\Rnp: \langle x,y\rangle \le -1 \ \forall y\in Q\}$$
is the {\it copolar} to the set $Q=\Log K$. In particular,
\begin{equation}\label{capakt}\Capa(K_t)=n!\,\Covol(Q_t^\circ)\end{equation}
for the copolar $Q_t^\circ$ of the set $Q_t=(1-t)Q_0+t\,Q_1$.

\begin{proposition}\label{BMcovol} We have
\begin{equation}\label{BMQ}\Covol(Q_t^\circ)\le \Covol(Q_0^\circ)^{1-t}\,\Covol(Q_1^\circ)^t,\quad 0<t<1.\end{equation}
If an equality here occurs for some $t\in (0,1)$, then $Q_0=Q_1$.
\end{proposition}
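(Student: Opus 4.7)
The plan is to reduce (\ref{BMQ}) to a H\"older-type inequality applied to a polar-coordinate representation of $\Covol(Q^\circ)$. Recall that $Q^\circ=\{x\in\Rnp:\ell_Q(x)\ge 1\}$, where $\ell_Q(x):=-h_Q(x)=\inf_{y\in Q}\langle x,-y\rangle$ is positively $1$-homogeneous and concave on $\Rnp$ (as an infimum of linear functions). Passing to polar coordinates $x=r\omega$ with $\omega\in S^{n-1}_+:=S^{n-1}\cap\Rnp$ and using the homogeneity of $\ell_Q$, I would first establish
\begin{equation}\label{polarformula}
\Covol(Q^\circ)=\frac1n\int_{S^{n-1}_+}\ell_Q(\omega)^{-n}\,d\sigma(\omega),
\end{equation}
with the convention $\ell_Q(\omega)^{-n}=+\infty$ when $\ell_Q(\omega)=0$. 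If either $\Covol(Q_0^\circ)$ or $\Covol(Q_1^\circ)$ is infinite, (\ref{BMQ}) is trivial, so I may assume both are finite, and hence $\ell_{Q_j}>0$ $\sigma$-a.e.

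Next, the linearity of the support function under Minkowski addition, $h_{Q_t}=(1-t)h_{Q_0}+t\,h_{Q_1}$, gives $\ell_{Q_t}=(1-t)\ell_{Q_0}+t\,\ell_{Q_1}$. Concavity of $\log$ on $(0,\infty)$ translates directly into the pointwise log-convexity
\begin{equation}\label{ptwise}
\ell_{Q_t}(\omega)^{-n}\le\ell_{Q_0}(\omega)^{-n(1-t)}\,\ell_{Q_1}(\omega)^{-nt},
\end{equation}
and integrating against $d\sigma$ then applying H\"older's inequality with conjugate exponents $1/(1-t)$ and $1/t$ yields
$$\int_{S^{n-1}_+}\ell_{Q_t}^{-n}\,d\sigma\le\left(\int_{S^{n-1}_+}\ell_{Q_0}^{-n}\,d\sigma\right)^{1-t}\left(\int_{S^{n-1}_+}\ell_{Q_1}^{-n}\,d\sigma\right)^{t},$$
which, in view of (\ref{polarformula}), is exactly (\ref{BMQ}). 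This is the familiar fact that log-convexity in a parameter passes to integrals against any positive measure.

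For the equality case, I would trace back through both inequalities. Equality in H\"older forces $\ell_{Q_0}^{-n}$ and $\ell_{Q_1}^{-n}$ to be proportional on $S^{n-1}_+$, while equality in the strict pointwise log-convexity (\ref{ptwise}) forces $\ell_{Q_0}(\omega)=\ell_{Q_1}(\omega)$ wherever both are positive; together, $\ell_{Q_0}=\ell_{Q_1}$ on $S^{n-1}_+$ and, by $1$-homogeneity, on all of $\Rnp$. Hence $h_{Q_0}=h_{Q_1}$ on $\Rnp$, and since a complete convex subset of $\Rnm$ is determined by its support function on $\Rnp$ (each such set being the intersection of half-spaces $\{y:\langle a,y\rangle\le h_Q(a)\}$, $a\in\Rnp$), we conclude $Q_0=Q_1$. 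I do not foresee a substantial obstacle: the only minor care concerns degenerate directions $\omega\in\partial S^{n-1}_+$ where $\ell_Q$ can vanish, which are harmlessly absorbed by the infinite-covolume convention in (\ref{polarformula}).
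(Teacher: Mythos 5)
Your proof is correct, and it runs on the same engine as the paper's --- linearity of the support function under Minkowski combination plus H\"older with exponents $(1-t)^{-1}$ and $t^{-1}$ --- but through a different integral representation of the covolume. The paper shows by a layer-cake/homogeneity computation that $\int_{\Rnp}e^{h_{Q_t}(x)}\,dx=n!\,\Covol(Q_t^\circ)$; since $e^{h_{Q_t}}=\bigl(e^{h_{Q_0}}\bigr)^{1-t}\bigl(e^{h_{Q_1}}\bigr)^{t}$ exactly, H\"older is then the only inequality used, and in the equality case one gets $h_{Q_0}=h_{Q_1}+C$ with the constant removed by letting $x\to0$. You instead use the radial (dual Brunn--Minkowski style) formula $\Covol(Q^\circ)=\frac1n\int_{S^{n-1}_+}\ell_Q(\omega)^{-n}\,d\sigma$ with $\ell_Q=-h_Q$, which costs an extra pointwise AM--GM step $\bigl((1-t)a+tb\bigr)^{-n}\le a^{-n(1-t)}b^{-nt}$ before H\"older, but pays off in the equality analysis: a.e. equality in that pointwise step forces $\ell_{Q_0}=\ell_{Q_1}$ directly, with no additive constant to dispose of (the H\"older proportionality is not even needed). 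Two small points of polish: equality of the integrals gives the pointwise equality only $\sigma$-a.e., and you should invoke the continuity of the concave function $\ell_Q$ on the open orthant to upgrade this to everywhere before homogenizing; and both your proof and the paper's rely, at the same level of implicitness, on the fact that a complete convex subset of $\Rnm$ is recovered from its support function on the open orthant $\Rnp$ --- you at least state this. Net effect: a correct, slightly more geometric variant, at the price of one extra elementary inequality.
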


\begin{proof} Let, as before, $h_Q$ be the restriction of the support function of a convex set $Q\subset\Rnm$ to $\Rnp$:
$$ h_Q(x)=\sup\{\langle x,y\rangle:\: y\in Q\},\ x\in\Rnp.$$
We have then
\begin{eqnarray*}
\int_{\Rnp} e^{h_{Q_t}(x)}\,dx &=& \int_{\Rnp} dx \int_{-h_{Q_t}(x)}^\infty e^{-s}\,ds = \int_0^\infty e^{-s}\,ds \int_{h_{Q_t}(x)\ge -s} dx \\
&=& \int_0^\infty e^{-s} \Vol(\{h_{Q_t}(x)\ge -s\})\,ds \\
&=& \Vol(\{h_{Q_t}(x)\ge -1\}) \int_0^\infty e^{-s}s^n \,ds\\
&=& n!\, \Covol({Q_t^\circ}).
\end{eqnarray*}
Note that $h_{Q_t}=(1-t) h_{Q_0}+ t \, h_{Q_1}$.
Therefore, by H\"{o}lder's inequality with $p=(1-t)^{-1}$ and $q=t^{-1}$, we have
\begin{equation}\label{Hold}
\int_{\Rnp} e^{h_{Q_t}(x)}\,dx\le\left(\int_{\Rnp} e^{h_{Q_0}(x)}dx\right)^{1-t}\left(\int_{\Rnp} e^{h_{Q_1}(x)}dx\right)^{t},
\end{equation}
which proves (\ref{BMQ}).

An equality in (\ref{BMQ}) implies the equality case in H\"{o}lder's inequality (\ref{Hold}), which means the functions
$e^{h_{Q_0}}$ and $e^{h_{Q_1}}$ are proportional, so $h_{Q_0}(x)=h_{Q_1}(x)+C$ for all $x\in\Rnp$. Since both $h_{Q_0}(x)$ and $h_{Q_1}(x)$ converge to $0$ as $x\to 0$ along $\Rnp$, we get $C=0$, which completes the proof.
\end{proof}

\medskip
Finally, by (\ref{capakt}), we get

\begin{theorem}\label{CapBM} For polynomially convex, non-pluripolar compact Reinhardt sets $K_j\Subset\D^n$, the Monge-Amp\`ere capacity $\Capa(K_t,\D^n)$ is a logarithmically convex function of $\,t$; in other words, the Brunn-Minkowski inequality (\ref{logconvcap}) holds. An equality in (\ref{logconvcap}) occurs for some $t\in (0,1)$ if and only if $K_0=K_1$.
\end{theorem}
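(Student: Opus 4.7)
The plan is to note that Theorem~\ref{CapBM} is essentially a direct corollary of the two preceding results gathered in this section, so the proof reduces to assembling them and handling the equality case via the Reinhardt/log-convex dictionary.

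First I would invoke formula (\ref{capakt}), which identifies $\Capa(K_t,\D^n) = n!\,\Covol(Q_t^\circ)$, where $Q_t = (1-t)Q_0 + t\,Q_1$ and $Q_j = \Log K_j$. Applying the same formula at the endpoints $t=0,1$ gives $\Capa(K_j,\D^n) = n!\,\Covol(Q_j^\circ)$. Plugging these three identities into the inequality of Proposition~\ref{BMcovol} then yields
\[
\Capa(K_t,\D^n) = n!\,\Covol(Q_t^\circ) \le n!\,\Covol(Q_0^\circ)^{1-t}\Covol(Q_1^\circ)^t = \Capa(K_0,\D^n)^{1-t}\Capa(K_1,\D^n)^t,
\]
which is precisely (\ref{logconvcap}); equivalently, $t \mapsto \log\Capa(K_t,\D^n)$ is convex on $[0,1]$.

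For the equality case, suppose equality holds in (\ref{logconvcap}) for some $t\in(0,1)$. Then equality holds in (\ref{BMQ}) for the same $t$, and the equality statement in Proposition~\ref{BMcovol} forces $Q_0 = Q_1$. Since $K_0$ and $K_1$ are assumed to be complete logarithmically convex Reinhardt sets, they are completely determined by their logarithmic images: the map $K \mapsto \Log K$ is a bijection on this class, so $Q_0 = Q_1$ gives $K_0 = K_1$. Conversely, if $K_0 = K_1$, the geodesic is constant and all quantities agree, so equality is trivial.

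There is essentially no obstacle here: the hard analytic work was done earlier (the geometric characterization of $K_t$ as $K_t^\times$, the capacity-covolume identity, and the Hölder-based covolume inequality). The only point requiring a moment of care is the passage from $Q_0 = Q_1$ back to $K_0 = K_1$, which rests on the fact that complete logarithmically convex Reinhardt sets in $\D^n$ are in bijection with complete convex subsets of $\Rnm$.
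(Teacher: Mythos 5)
Your proposal is correct and follows exactly the paper's route: the authors also deduce Theorem~\ref{CapBM} directly from the capacity--covolume identity (\ref{capakt}) combined with Proposition~\ref{BMcovol}, including its equality statement. Your extra remark on recovering $K_0=K_1$ from $Q_0=Q_1$ via the correspondence between complete logarithmically convex Reinhardt sets and their logarithmic images is a harmless elaboration of a point the paper leaves implicit.
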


{\it Remark.} The general situation of compact, polynomially convex Reinhardt sets reduces to the case $K_0,K_1\subset\D^n$ because for $K$ in the polydisk $\D_R^n$ of radius $R$, we have $\Capa(K,\D_R^n)=\Capa(\frac1R K,\D^n)$ and $(\frac1R K)_t = \frac1R K_t$.

\medskip
{\bf Acknowledgement.} Part of the work was done while the second named author was visiting Universit\'e Pierre et Marie Curie in March 2017; he thanks Institut de Math\'ematiques de Jussieu for the hospitality. The authors are grateful to the anonymous referee for careful reading of the text.

\bigskip\noindent
{\sc Dario Cordero-Erausquin}

\noindent 
Institut de Math\'ematiques de Jussieu, Sorbonne Universit\'e, 4 place Jissieu, 75252 Paris Cedex 05, France

\noindent
\emph{e-mail:} dario.cordero@imj-prg.fr

\medskip\noindent
{\sc Alexander Rashkovskii}

\noindent 
Tek/Nat, University of Stavanger, 4036 Stavanger, Norway

\noindent
\emph{e-mail:} alexander.rashkovskii@uis.no

\end{document}